\documentclass[11pt]{amsart}
\usepackage{amsmath,amssymb,amsthm}
\usepackage[margin=1in]{geometry}
\usepackage{cite}
\usepackage[hidelinks]{hyperref}

\theoremstyle{plain}
\newtheorem{theorem}{Theorem}[section]
\newtheorem{lemma}[theorem]{Lemma}
\newtheorem{proposition}[theorem]{Proposition}
\theoremstyle{definition}
\newtheorem{definition}[theorem]{Definition}
\theoremstyle{remark}
\newtheorem{remark}[theorem]{Remark}
\numberwithin{equation}{section}

\title[Gaussian bounds for distinct subset sums]{Gaussian Lower Bounds for Distinct Subset Sums in Higher Dimensions}
\author[Zijie Gu]{Zijie Gu}
\dedicatory{\normalfont\parbox{0.95\textwidth}{\centering\small
Independent researcher, Shanghai, China\\[3pt]
\href{mailto:hilightgu@gmail.com}{hilightgu@gmail.com}\\[3pt]
}}
\keywords{distinct subset sums; additive combinatorics; Gaussian bounds; probabilistic method; lattice sums; quadratic forms}
\hypersetup{
  pdftitle={Gaussian Lower Bounds for Distinct Subset Sums in Higher Dimensions},
  pdfauthor={Zijie Gu},
  pdfkeywords={distinct subset sums, additive combinatorics, Gaussian bounds, probabilistic method, lattice sums, quadratic forms}
}

\begin{document}
\begin{abstract}
For sequences of $n$ nonnegative integer vectors in $\mathbb Z^k$ with distinct subset sums, we prove, for fixed $k$, the uniform lower bound $M\ge (C_k-o(1))2^{n/k}/\sqrt n$ for the largest coordinate, where
\[
C_k=\sqrt{\frac{2k}{\pi}}\,
\frac{(k+1)^{(k-1)/(2k)}}{\sqrt{\lfloor (k+1)^2/4\rfloor}}.
\]
The proof compares a Gaussian-kernel expectation with a sum over a translated lattice, uniformly over the vector sequence. We optimise the resulting quadratic-form constant using a vertex second-moment certificate. We show that $C_k$ also exceeds the variance constant obtained by optimising an ellipsoidal geometry.
\end{abstract}
\maketitle

\section{Introduction}\label{sec:introduction}

This paper seeks lower bounds for the largest coordinate of a sequence of nonnegative integer vectors whose subset sums are all distinct. In dimension one, this is the Erd\H{o}s distinct subset sums problem. For a positive integer $n$ and positive integers $0<a_1<\cdots<a_n$, the condition is
\[
\sum_{i\in S}a_i=\sum_{i\in T}a_i\quad\Longrightarrow\quad S=T,
\qquad S,T\subseteq\{1,\ldots,n\}.
\]
The variance method gives the classical Erd\H{o}s--Moser bound $a_n\ge 2^n/(4\sqrt n)$, as recalled in the opening discussion of Dubroff, Fox, and Xu \cite{dubroff2021note}. In the same note \cite[Theorem~1]{dubroff2021note}, they give two proofs of the stronger asymptotic bound
\[
a_n\ge \left(\sqrt{\frac2\pi}-o(1)\right)\frac{2^n}{\sqrt n}.
\]
The first bound holds for every $n\ge1$; the second is asymptotic as $n\to\infty$. Dubroff, Fox, and Xu explicitly record that the latter constant had already been obtained in unpublished work of Elkies and Gleason. These scalar results apply to our setting when $k=1$, after ordering the entries increasingly.

In 2026, Costa and Della Fiore \cite[Theorem~1.1 and Corollary~5.3]{costa2026bandwidth} prove the finite scalar bound
\[
a_n\ge\sum_{j=0}^{n-1}\binom{j}{\lfloor j/2\rfloor}\qquad(n\ge1).
\]
For $n\ge3$, this improves the bound obtained in the second proof of Dubroff, Fox, and Xu, while retaining the leading constant $\sqrt{2/\pi}$.

For $n\ge1$, let $m_1(n)$ denote the minimum attainable largest entry $a_n$ among the $n$-term strictly increasing positive integer sequences with distinct subset sums described above.

In dimension one, the classical Erd\H{o}s conjecture asks whether $m_1(n)\ge c2^n$ for a positive absolute constant $c$. In September of 2026, Tom Adamczewski and Thomas F. Bloom report a disproof in \emph{FrontierMath Erd\H{o}s}, \cite[Appendix~B.1, Theorem~1]{adamczewski2026frontiermath}. More precisely, they report that for every real $\varepsilon>0$ and every integer $L\ge1$, there exist integers $n\ge L$ and $N\ge1$ and a set $\mathcal A\subseteq\{1,\ldots,N\}$ with $|\mathcal A|=n$ and all subset sums distinct, such that $N\le\varepsilon2^n$. Conditional on this reported theorem, no positive absolute $c$ satisfies the classical conjecture. 

Steinerberger \cite[Theorem~1 and Corollary~2]{steinerberger2023some} obtains the same scalar asymptotic constant through a Fourier integral involving a product of squared cosines. He also compares a smoothed signed-sum distribution with a Gaussian density, under an additional bound on the largest summand \cite[Theorem~2]{steinerberger2023some}. This is a methodological predecessor to the present argument.

For the vector problem, we use the following conventions. Throughout, $k$ and $n$ are positive integers, with $k$ fixed as $n\to\infty$.

\begin{definition}\label{def:setting}
For positive integers $k,n$, a sequence $\Sigma=(a_1,\ldots,a_n)$ with $a_i\in\mathbb Z_{\ge0}^k$ is \emph{admissible} if its $2^n$ subset sums are all distinct. For such a sequence, write
\[
M(\Sigma)=\max_{1\le i\le n,\,1\le j\le k}(a_i)_j.
\]
Here $(a_i)_j$ denotes the $j$th coordinate of $a_i$. Define
\[
m_k(n)=\min\{M(\Sigma):\Sigma\text{ is an admissible sequence of length }n\text{ in }\mathbb Z_{\ge0}^k\}.
\]
\end{definition}

Unless otherwise specified, write $M=M(\Sigma)\ge1$.

The minimum exists for every $k,n\ge1$. Distribute the $n$ vectors as evenly as possible among the $k$ coordinate axes, taking $e_j,2e_j,4e_j,\ldots$ on each used axis, where $e_j$ is the $j$th standard basis vector. Uniqueness of binary representation makes the subset sums distinct, so the set defining $m_k(n)$ is nonempty. Its integer values therefore have an attained minimum, and the construction gives
\[
m_k(n)\le2^{\lceil n/k\rceil-1}.
\]
This also applies when $n<k$, with some axes unused.

For each fixed positive integer $k$, Costa, Dalai, and Della Fiore \cite[arXiv:2107.07885v3, Theorem~2.5]{costa2021variations} obtain the following asymptotic bound for every admissible sequence. We take $\lambda=1$ in their theorem: their family $\mathcal F_{1,n}$ is the full power set, and their $M$-bounded integer vectors lie in $[0,M]^k$, exactly as in Definition~\ref{def:setting}. With $\Gamma$ denoting Euler's gamma function, their bound is
\begin{equation}\label{eq:variance-bound}
M\ge\bigl(D_{k,2}-o(1)\bigr)\frac{2^{n/k}}{\sqrt n},
\qquad D_{k,2}=\frac{2\Gamma(k/2+1)^{1/k}}{\sqrt{\pi(k+2)}}.
\end{equation}

The author's earlier preprint \cite{gu2025generalisation} studied the same
vector problem using first- and third-moment methods.

We use a Gaussian kernel in place of a moment and prove the estimate directly for bounded vector arrays, uniformly even when their covariance matrices vary with $n$ or are singular. The coordinate bound and the distinctness condition give, respectively, lower and upper bounds on the same expectation. A suitable quadratic form then gives a single constant for every fixed dimension.

\begin{theorem}\label{mainthe}
Fix a positive integer $k$. Let $\Sigma=(a_1,\ldots,a_n)$ be an admissible sequence as in Definition~\ref{def:setting}, and let $M=M(\Sigma)$. As $n\to\infty$,
\begin{equation}\label{eq:main-bound}
M\ge\bigl(C_k-o(1)\bigr)\frac{2^{n/k}}{\sqrt n},
\qquad C_k=\sqrt{\frac{2k}{\pi}}\,
\frac{(k+1)^{(k-1)/(2k)}}{\sqrt{\lfloor(k+1)^2/4\rfloor}}.
\end{equation}
The error depends only on $k,n$ and is uniform over all such sequences. In particular, the same bound holds for $m_k(n)$.
\end{theorem}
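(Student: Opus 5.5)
The plan is to bound a single Gaussian-kernel expectation from both sides, for a positive definite quadratic form $Q$ on $\mathbb R^k$ that is fixed later. Let $\varepsilon_1,\ldots,\varepsilon_n$ be independent uniform signs and put $X=\tfrac12\sum_i\varepsilon_i a_i$. Thus $X$ is a uniformly random subset sum, shifted by the fixed vector $c=\tfrac12\sum_i a_i$. For a scale $s>0$ consider
\[
E(s)=\mathbb E\exp\bigl(-Q(X)/(2s^2)\bigr).
\]

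\emph{Lower bound (coordinate bound).} By Jensen's inequality, $E(s)\ge\exp(-\mathbb E Q(X)/(2s^2))$. Independence and $\mathbb E\varepsilon_i\varepsilon_j=\delta_{ij}$ give $\mathbb E Q(X)=\tfrac14\sum_i Q(a_i)$. Since $a_i\in[0,M]^k$ and $Q$ is convex, each $Q(a_i)$ is at most $M^2q$, where $q=\max_{v\in\{0,1\}^k}Q(v)$ is the vertex second-moment constant. Hence
\[
E(s)\ge\exp\bigl(-nM^2q/(8s^2)\bigr).
\]
This step uses nothing about the covariance of the $a_i$, so it is automatically uniform, even for singular or $n$-dependent covariance.

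\emph{Upper bound (distinctness).} The $2^n$ values of $X$ are distinct points of the translated lattice $\mathbb Z^k-c$, each taken with probability $2^{-n}$. Therefore
\[
E(s)\le 2^{-n}\sum_{x\in\mathbb Z^k-c}e^{-Q(x)/(2s^2)}.
\]
Poisson summation, or a direct Riemann-sum comparison using monotonicity of the Gaussian along lines, bounds this lattice sum by
\[
(2\pi)^{k/2}s^k(\det Q)^{-1/2}\bigl(1+O_{Q}(1/s)\bigr),
\]
uniformly in the translate $c$. This uniformity in $c$ is the key point, because the translate depends on $\Sigma$. The scale we choose will be $s\asymp M\sqrt n$. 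Taking $M\ge1$ and using the trivial bound $M\ge 2^{n/k}/(n+1)$ (from counting subset sums in $[0,nM]^k$), we get $s\to\infty$, so the error factor is $1+o(1)$ with dependence only on $k,n$.

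Combining the two bounds gives
\[
2^n\le(1+o(1))(2\pi)^{k/2}s^k(\det Q)^{-1/2}\exp\bigl(nM^2q/(8s^2)\bigr).
\]
Minimising the right side over $s$ gives $s^2=nM^2q/(4k)$. Substituting and taking $k$th roots yields
\[
M\ge(1-o(1))\sqrt{\frac{2k}{\pi}}\cdot e^{-1/2}\cdot\frac{(\det Q)^{1/(2k)}}{\sqrt q}\cdot\frac{2^{n/k}}{\sqrt n}.
\]
I would need to check the bookkeeping of the factor $e^{-1/2}$ carefully. If it genuinely appears, it must be absorbed by replacing Jensen with a sharper central-limit comparison. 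The alternative is that the constant in the statement already reflects the normalisation once constants are tracked, since the stated $C_k$ has no $e$. Which of these holds is the first thing I would verify. It is the main risk in the argument: the Jensen step may be lossy compared with a genuine Gaussian comparison. If so, I would instead lower-bound $E(s)$ via a local CLT-free inequality, $\mathbb E\prod_i\cosh$-type expansions of the Gaussian characteristic function, which are again controlled only through $\sum_i Q(a_i)$.

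\emph{Choice of $Q$.} It remains to maximise $(\det Q)^{1/k}/q$, a quantity invariant under scaling $Q$. I take
\[
Q=(k+1)I-J,
\]
where $J$ is the all-ones matrix. Its eigenvalues are $k+1$ with multiplicity $k-1$ and $1$ on the all-ones vector, so $\det Q=(k+1)^{k-1}$. A vertex with $m$ ones gives $Q(v)=m(k+1)-m^2=m(k+1-m)$, maximised over integers $m$ at $q=\lfloor(k+1)^2/4\rfloor$. Substituting reproduces exactly the quadratic-form factor
\[
\frac{(k+1)^{(k-1)/(2k)}}{\sqrt{\lfloor(k+1)^2/4\rfloor}}
\]
in $C_k$, which is the "vertex second-moment certificate" referred to in the abstract.

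Finally, the bound is uniform over admissible $\Sigma$, so applying it to a minimiser gives the same bound for $m_k(n)$. The main obstacle, besides the constant check above, is making the lattice-sum estimate fully uniform in the translate $c$ and in $\Sigma$. Poisson summation handles this cleanly, since only the zero frequency survives up to $O(e^{-\pi^2 s^2\lambda_{\min}})$.
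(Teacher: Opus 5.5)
\textbf{There is a genuine gap.} As written, your argument proves only $M\ge(e^{-1/2}C_k-o(1))2^{n/k}/\sqrt n$. The factor $e^{-1/2}$ is real and does not come from a normalisation slip.

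Put $x=nM^2q/(4ks^2)$. Your two bounds give
\[
2^{2n/k}\le(1+o(1))\,2\pi(\det Q)^{-1/k}s^2e^{x}=(1+o(1))\,2\pi(\det Q)^{-1/k}\,\frac{nM^2q}{4k}\cdot\frac{e^{x}}{x},
\]
and $\min_x e^x/x=e$. Already for $k=1$ this yields $\sqrt{2/\pi}\,e^{-1/2}\approx0.48$, which is below even the variance constant $1/\sqrt3$.

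The lossy step is Jensen. For a genuinely Gaussian $Z$ with covariance $T$, $\mathbb E e^{-t\|Z\|_2^2}=\det(I_k+2tT)^{-1/2}$ decays only polynomially in $t$. Jensen's $e^{-t\operatorname{tr}T}$ decays exponentially. In the display above, the true behaviour would replace $e^x$ by $1+x$. Since $(1+x)/x\to1$ as $x\to\infty$, that replacement gives exactly $C_k$.

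\textbf{The missing ingredient} is a uniform Gaussian comparison. For $Z=n^{-1/2}\sum_i r_iz_i$ with $\|z_i\|_2^2\le b$ and fixed $t$,
\[
\mathbb E e^{-t\|Z\|_2^2}\ge\Bigl(1+\frac{2tb}{k}\Bigr)^{-k/2}-O_{k,b,t}(n^{-1}).
\]
You would apply it with $z_i=Q^{1/2}a_i/M$, $b=q$ and $t=nM^2/(8s^2)$, so that $2tb/k=x$. The paper proves it in four steps.
\begin{enumerate}
\item Write $e^{-t\|y\|_2^2}=\mathbb E_G\cos(\sqrt{2t}\,G\cdot y)$ with $G$ standard Gaussian. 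Independence of the signs then factors the expectation as $\mathbb E_G\prod_i\cos(\sqrt{2t/n}\,G\cdot z_i)$. These are cosines, i.e.\ the characteristic function; a $\cosh$ or moment-generating representation points the wrong way for a lower bound.
\item On the event $\|G\|_2\le\sqrt n/(2\sqrt{2tb})$, use $\log\cos u\ge-u^2/2-cu^4$.
\item Off that event, use Markov's inequality.
\item Finally apply the arithmetic--geometric mean inequality $\det(I_k+2tT)\le(1+2t\operatorname{tr}T/k)^k$ together with $\operatorname{tr}T\le b$.
\end{enumerate}

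\textbf{The scale must also change.} With the sharp bound, $(1+x)/x$ has no interior minimum, so the Jensen optimum $s^2=nM^2q/(4k)$ (that is, $x=1$) is the wrong choice. You want a narrow kernel, $x\to\infty$. The comparison error depends on $t$, so do this in order: for given $\delta>0$, fix $x$ so large that $\sqrt{x/(1+x)}\ge1-\delta$, and only then take $n\ge n_0(k,\delta)$. This gives $M\ge C_k\sqrt{x/(1+x)}\,(1-o(1))\,2^{n/k}/\sqrt n$. Since $s\asymp M\sqrt n\ge\sqrt n$ still tends to infinity, your translate-uniform Poisson bound still applies.

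\textbf{The rest of your plan is correct} and matches the paper up to scaling:
\begin{itemize}
\item the lattice-sum bound uniform in $c$ (the paper uses cell averaging rather than Poisson summation);
\item the vertex maximum by convexity;
\item the choice $Q=(k+1)I-J=(k+1)H$, with $\det Q=(k+1)^{k-1}$ and $q=\lfloor(k+1)^2/4\rfloor$.
\end{itemize}
One small correction: the paper's ``vertex second-moment certificate'' is a random vertex $V$ with $\mathbb E VV^{\mathsf T}\propto H^{-1}$. It is used only to show that $H$ maximises $C(A)$, and this theorem does not need it.
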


For example,
\[
C_1=\sqrt{\frac2\pi},\qquad C_2=\sqrt{\frac{2\sqrt3}{\pi}}.
\]
Thus the one-dimensional constant agrees with the scalar bound above. For every fixed $k$, the strict inequality $C_k>D_{k,2}$ improves the vector variance benchmark \eqref{eq:variance-bound}.

Theorem~\ref{thm:quadratic-optimality} then proves that the chosen quadratic form maximises the explicit formula $C(A)$ defined in Proposition~\ref{prop:quadratic-bound}. Its vertex certificate also describes a minimum-volume enclosing ellipsoid of a classical projected cube, as explained in Appendix~\ref{app:geometry}; the theorem does not determine the optimal constant for $m_k(n)$.

To separate the effect of the quadratic form from that of the Gaussian kernel, Proposition~\ref{prop:ellipsoidal-variance} also establishes the optimised variance constant
\[
V_k=D_{k,2}\frac{C_k}{C_1}<C_k.
\]

\section{Gaussian Comparison Framework}\label{sec:gaussian-estimates}

Bounded coefficients give a lower bound for a Gaussian-kernel expectation; distinctness and integrality give an upper bound via a translated lattice. For fixed kernel parameters, these estimates are uniform over admissible sequences, including those with singular covariance matrices, and together bound the largest coordinate from below.

\subsection{Centred Subset Sums and Normalisation}\label{sec:model}

Let $\Sigma=(a_1,\ldots,a_n)$ be admissible as in Definition~\ref{def:setting}, with $M=M(\Sigma)$. Let $r_1,\ldots,r_n$ be independent and uniformly distributed on $\{-1,1\}$, and set
\[
X=\frac12\sum_{i=1}^n r_i a_i.
\]
If $S=\{i:r_i=1\}$, then
\begin{equation}\label{eq:centered-subset}
X=\sum_{i\in S}a_i-\frac12\sum_{i=1}^n a_i.
\end{equation}
Consequently, $X$ has $2^n$ distinct values, each with probability $2^{-n}$. All values lie in the same translate of $\mathbb Z^k$. The coordinates of $X$ need not be independent: they use the same random signs. Our arguments require independence of the signs only.

To apply the bounded-array Gaussian estimate and track the lattice spacing, define
\begin{equation}\label{eq:normalised-sum}
u_i=\frac{a_i}{M},\qquad
Y=\frac{2X}{M\sqrt n}=\frac1{\sqrt n}\sum_{i=1}^n r_i u_i.
\end{equation}
Then $u_i\in[0,1]^k$, and the values of $Y$ lie in
\begin{equation}\label{eq:lattice-spacing}
h(\mathbb Z^k+\theta),\qquad h=\frac2{M\sqrt n},
\end{equation}
for some $\theta\in\mathbb R^k$ depending on the sequence.

For this normalised vector $Y$, fix a real symmetric positive definite $k\times k$ matrix $A$ and a parameter $t>0$, both independent of $n$. We compare two bounds on
\[
Q_t=\mathbb E e^{-tY^{\mathsf T}AY}.
\]
\subsection{A Gaussian-Kernel Lower Bound for Bounded Random Sums}

\begin{lemma}\label{lem:gaussian-probability}
Let $k,n\ge1$ be integers and let $b>0$. Let $z_1,\ldots,z_n\in\mathbb R^k$ satisfy $\|z_i\|_2^2\le b$. Let $r_1,\ldots,r_n$ be independent and uniformly distributed on $\{-1,1\}$, and put
\[
Z=\frac1{\sqrt n}\sum_{i=1}^n r_i z_i.
\]
For fixed $k,b,t$, with $t>0$, as $n\to\infty$,
\begin{equation}\label{eq:probability-bound}
\mathbb E e^{-t\|Z\|_2^2}
\ge\left(1+\frac{2tb}{k}\right)^{-k/2}-O_{k,b,t}(n^{-1}).
\end{equation}
The implied constant does not depend on the vectors $z_i$, which may vary with $n$ subject to the stated bound. The covariance matrix of $Z$ may be singular, and the coordinates of $Z$ need not be independent.
\end{lemma}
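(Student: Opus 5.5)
The plan is to linearise the Gaussian kernel with a Gaussian integral, so that the expectation over the signs factorises into a product of cosines. For $s=\sqrt{2t}$ and $g$ a standard Gaussian vector in $\mathbb R^k$ independent of the $r_i$, we have $e^{-t\|Z\|_2^2}=\mathbb E_g\, e^{\,i s\langle g,Z\rangle}$. Taking expectation over the independent signs first (Fubini is fine since everything is bounded) gives
\[
\mathbb E e^{-t\|Z\|_2^2}=\mathbb E_g\prod_{i=1}^n\cos x_i(g),\qquad x_i(g)=\frac{s\langle g,z_i\rangle}{\sqrt n}.
\]
Only independence of the $r_i$ is used here, so dependence between coordinates of $Z$ and singularity of its covariance play no role. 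Put $S=\frac1n\sum_i z_iz_i^{\mathsf T}$, which is positive semidefinite with $\operatorname{tr}S\le b$. Then $\sum_i x_i^2=s^2g^{\mathsf T}Sg=2t\,g^{\mathsf T}Sg$.

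Next I would split according to whether $\|g\|_2\le R:=\sqrt n/(s\sqrt b)$. On this ball every $|x_i|\le s\|g\|\sqrt b/\sqrt n\le1$, so the elementary inequality $\log\cos x\ge -x^2/2-cx^4$ for $|x|\le1$ applies, with $c$ absolute. We also have
\[
\sum_i x_i^4\le \max_i x_i^2\sum_i x_i^2\le \frac{s^4b^2\|g\|^4}{n}.
\]
Hence on the ball
\[
\prod_i\cos x_i\ge e^{-t g^{\mathsf T}Sg}\,e^{-c s^4b^2\|g\|^4/n}\ge e^{-tg^{\mathsf T}Sg}\bigl(1-c s^4b^2\|g\|^4/n\bigr).
\]
Off the ball, the product is at least $-1$, and $\mathbb P(\|g\|>R)\le e^{-c_{k,b,t}n}$. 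In the same way, restoring the main term $e^{-tg^{\mathsf T}Sg}$ on the complement of the ball costs only $e^{-c n}$. Collecting these pieces gives
\[
\mathbb E e^{-t\|Z\|^2}\ge \mathbb E_g e^{-tg^{\mathsf T}Sg}-\frac{c s^4b^2\,\mathbb E\|g\|^4}{n}-2e^{-c_{k,b,t}n}.
\]
The error term is $O_{k,b,t}(n^{-1})$ and does not depend on the $z_i$.

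Finally I would evaluate the Gaussian integral, $\mathbb E_g e^{-tg^{\mathsf T}Sg}=\det(I+2tS)^{-1/2}$, which is valid for any positive semidefinite $S$, singular or not. To bound this below uniformly, let $\lambda_1,\ldots,\lambda_k\ge0$ be the eigenvalues of $S$, so that $\sum_j\lambda_j\le b$. By AM--GM,
\[
\det(I+2tS)=\prod_j(1+2t\lambda_j)\le\Bigl(1+\tfrac{2t}{k}\sum_j\lambda_j\Bigr)^k\le(1+2tb/k)^k,
\]
which yields the stated main term $(1+2tb/k)^{-k/2}$.

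The main obstacle is the control of the cosine product, which can be negative or oscillate for large $g$. The choice of truncation radius $R\asymp\sqrt n$ is what reconciles the two requirements: it keeps every $|x_i|\le1$, so the quartic Taylor bound applies on the ball, and it keeps the Gaussian tail exponentially small, so the crude bound off the ball is harmless. The fourth-moment error must be bounded through $\max_i x_i^2\le s^2b\|g\|^2/n$ rather than through any eigenvalue information on $S$, so that the constant stays uniform in the $z_i$.
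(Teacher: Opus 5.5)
Your proposal is correct and follows essentially the same route as the paper. Both write the kernel as a cosine product via an auxiliary Gaussian vector, truncate at $\|G\|_2\asymp\sqrt n$ so that a quartic Taylor bound for $\log\cos$ applies uniformly, and finish with the Gaussian integral and AM--GM on the eigenvalues of $\frac1n\sum_i z_iz_i^{\mathsf T}$. The differences are cosmetic: you use the Taylor interval $|x|\le1$ instead of $|x|\le1/2$, and an exponential Gaussian tail bound where the paper uses a fourth-moment Markov bound for the truncation event.
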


\begin{proof}
Let $G$ be a standard Gaussian vector in $\mathbb R^k$, independent of the signs. For every fixed $x\in\mathbb R^k$,
\begin{equation}\label{eq:gaussian-cosine}
\mathbb E_G\cos(\sqrt{2t}\,G\cdot x)=e^{-t\|x\|_2^2}.
\end{equation}
The one-dimensional Gaussian characteristic function and independence of the Gaussian coordinates give this identity. Applying \eqref{eq:gaussian-cosine} to each value of $Z$, we may interchange the sign and Gaussian expectations because the cosine integrand is bounded. Conditional on $G$, independence and symmetry of the signs factor the cosine average, giving
\begin{equation}\label{eq:cosine-product}
\mathbb E e^{-t\|Z\|_2^2}
=\mathbb E_G\prod_{i=1}^n\cos\left(\sqrt{\frac{2t}{n}}\,G\cdot z_i\right).
\end{equation}

Define the positive semidefinite matrix
\[
T=\frac1n\sum_{i=1}^n z_i z_i^{\mathsf T}.
\]
Taking traces and using the assumed norm bound gives
\[
\operatorname{tr}T
=\frac1n\sum_{i=1}^n\|z_i\|_2^2\le b.
\]
The function $\log\cos x$ is smooth on $[-1/2,1/2]$, with bounded fourth derivative. Taylor's theorem therefore gives an absolute constant $c>0$ such that
\[
\log\cos x\ge-\frac{x^2}{2}-cx^4\qquad (|x|\le1/2).
\]
To put all cosine arguments within this Taylor interval, define the event in the probability space of the auxiliary Gaussian vector $G$ by
\[
E_n=\left\{\|G\|_2\le\frac{\sqrt n}{2\sqrt{2tb}}\right\}.
\]
Cauchy--Schwarz and $\|z_i\|_2\le\sqrt b$ give, on $E_n$, for every $i$,
\[
\left|\sqrt{\frac{2t}{n}}\,G\cdot z_i\right|
\le\sqrt{\frac{2t}{n}}\,\|G\|_2\sqrt b\le\frac12.
\]
Thus the Taylor estimate applies simultaneously to every factor in \eqref{eq:cosine-product}. Moreover,
\begin{align*}
\frac12\sum_{i=1}^n\left(\sqrt{\frac{2t}{n}}\,G\cdot z_i\right)^2
&=tG^{\mathsf T}TG,\\
\sum_{i=1}^n\left(\sqrt{\frac{2t}{n}}\,G\cdot z_i\right)^4
&\le\frac{4t^2b^2}{n}\|G\|_2^4.
\end{align*}
Exponentiating the sum of the Taylor bounds gives, on $E_n$,
\[
\prod_{i=1}^n\cos\left(\sqrt{\frac{2t}{n}}\,G\cdot z_i\right)
\ge e^{-tG^{\mathsf T}TG}
\exp\left(-\frac{4ct^2b^2}{n}\|G\|_2^4\right).
\]
Use $e^{-x}\ge1-x$ for $x\ge0$. Since $T\succeq0$, we also have $0<e^{-tG^{\mathsf T}TG}\le1$, so the multiplicative error gives the additive bound
\[
\prod_{i=1}^n\cos\left(\sqrt{\frac{2t}{n}}\,G\cdot z_i\right)
\ge e^{-tG^{\mathsf T}TG}-\frac{4ct^2b^2}{n}\|G\|_2^4.
\]
Outside $E_n$, the cosine product lies in $[-1,1]$ and the Gaussian term lies in $(0,1]$, so their difference is at least $-2$. Since $\mathbb E\|G\|_2^4=k(k+2)$, Markov's inequality gives
\[
\mathbb P(E_n^{\mathrm c})\le\frac{64t^2b^2k(k+2)}{n^2}.
\]
Taking expectations over $E_n$ and its complement therefore gives the explicit estimate
\begin{align*}
\mathbb E e^{-t\|Z\|_2^2}
&\ge\mathbb E_G e^{-tG^{\mathsf T}TG}
-\frac{4ct^2b^2}{n}\mathbb E\|G\|_2^4-2\mathbb P(E_n^{\mathrm c})\\
&\ge\mathbb E_G e^{-tG^{\mathsf T}TG}
-\frac{4ct^2b^2k(k+2)}n
-\frac{128t^2b^2k(k+2)}{n^2}.
\end{align*}
Both error terms depend only on $k,b,t,n$, proving uniformity over the vectors. In particular,
\begin{equation}\label{eq:gaussian-comparison}
\mathbb E e^{-t\|Z\|_2^2}
\ge\mathbb E_G e^{-tG^{\mathsf T}TG}-O_{k,b,t}(n^{-1}).
\end{equation}

Let $\lambda_1,\ldots,\lambda_k\ge0$ be the eigenvalues of $T$. Orthogonal diagonalisation and Gaussian integration give
\[
\mathbb E_G e^{-tG^{\mathsf T}TG}
=\prod_{j=1}^k(1+2t\lambda_j)^{-1/2}.
\]
Every factor $1+2t\lambda_j$ is positive, including when $\lambda_j=0$. The first inequality below is the arithmetic--geometric mean inequality for these factors; the second uses $\operatorname{tr}T\le b$:
\[
\prod_{j=1}^k(1+2t\lambda_j)
\le\left(1+\frac{2t\operatorname{tr}T}{k}\right)^k
\le\left(1+\frac{2tb}{k}\right)^k.
\]
Taking reciprocal square roots reverses this inequality, so the Gaussian expectation is at least $(1+2tb/k)^{-k/2}$. Combining this with \eqref{eq:gaussian-comparison} proves the lemma, without requiring $T$ to be invertible.
\end{proof}

\subsection{A Uniform Upper Bound for Gaussian Lattice Sums}

We need a lattice-sum estimate uniform in the translation. We average over a cube centred at each lattice point; symmetry removes the linear term in the quadratic form.

\begin{lemma}\label{lem:gaussian-lattice}
Let $k\ge1$ be an integer and let $A$ be a real symmetric positive definite $k\times k$ matrix. For every $t,h>0$ and $\theta\in\mathbb R^k$,
\begin{equation}\label{eq:lattice-bound}
\sum_{z\in\mathbb Z^k}e^{-t[h(z+\theta)]^{\mathsf T}A[h(z+\theta)]}
\le e^{th^2\operatorname{tr}A/12}\,h^{-k}
\frac{(\pi/t)^{k/2}}{\sqrt{\det A}}.
\end{equation}
For fixed $A,t$, the exponential factor is $1+O_{A,t}(h^2)$ as $h\to0$, uniformly in $\theta$.
\end{lemma}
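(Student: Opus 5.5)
The plan is to compare each lattice term with the average of the Gaussian over a small cube around that lattice point, and then use convexity. Put $f(x)=e^{-tx^{\mathsf T}Ax}$ and $y_z=h(z+\theta)$. Let $C=[-h/2,h/2]^k$, so that the translates $y_z+C$ tile $\mathbb R^k$ up to measure zero.

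First we bound each term by a cube average. Write
\[
f(y_z+w)=f(y_z)\exp\bigl(-2t\,y_z^{\mathsf T}Aw-t\,w^{\mathsf T}Aw\bigr).
\]
We average this over $w$ uniform on $C$ and apply Jensen's inequality to the convex function $\exp$. This gives
\[
\frac1{h^k}\int_C f(y_z+w)\,dw\ge f(y_z)\exp\Bigl(-2t\,y_z^{\mathsf T}A\,\mathbb E w-t\,\mathbb E\,w^{\mathsf T}Aw\Bigr).
\]
By the symmetry of the cube, $\mathbb E w=0$, so the linear term drops out. For the quadratic term, the coordinates of $w$ are independent and uniform on $[-h/2,h/2]$, hence $\mathbb E\,ww^{\mathsf T}=(h^2/12)I$. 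Therefore $\mathbb E\,w^{\mathsf T}Aw=h^2\operatorname{tr}A/12$, and we obtain
\[
f(y_z)\le e^{th^2\operatorname{tr}A/12}\,h^{-k}\int_{y_z+C}f.
\]

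Next we sum over $z\in\mathbb Z^k$. Since the cubes $y_z+C$ tile the space, the integrals combine into $\int_{\mathbb R^k}f$. The standard Gaussian integral gives $\int_{\mathbb R^k}f=(\pi/t)^{k/2}(\det A)^{-1/2}$, using the positive definiteness of $A$. This proves \eqref{eq:lattice-bound}. All terms are positive, so Tonelli's theorem justifies the interchange of sum and integral.

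Nothing in this argument depends on $\theta$. The exponential factor $e^{th^2\operatorname{tr}A/12}$ is independent of $\theta$ and equals $1+O_{A,t}(h^2)$ as $h\to0$. This gives the uniformity claim.

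The only delicate step is the choice of a \emph{lower} bound for the cube average. A pointwise comparison of $f(y_z)$ with $f$ on the cube would bring in the gradient and lose uniformity in $\theta$. Jensen's inequality, together with the vanishing of the mean of $w$, avoids this.
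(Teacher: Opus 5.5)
Your proof is correct and matches the paper's argument. Both apply Jensen's inequality over a cube of side $h$ centred at each lattice point, where the linear term vanishes by symmetry and the quadratic term contributes $h^2\operatorname{tr}A/12$, and then sum over the tiling and evaluate the Gaussian integral. The only differences are cosmetic: you factor out $f(y_z)$ and apply Jensen to $\exp$ rather than to $s\mapsto e^{-ts}$, and you use closed cubes that tile up to measure zero instead of half-open cubes that partition exactly.
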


\begin{proof}
Write $f(x)=e^{-tx^{\mathsf T}Ax}$. For each lattice point $\xi\in h(\mathbb Z^k+\theta)$, let
\[
\mathcal C_\xi=\xi+[-h/2,h/2)^k.
\]
These cubes partition $\mathbb R^k$ and each has volume $h^k$. Let $U$ be uniform on $[-h/2,h/2)^k$. Its coordinates are independent, with
\[
\mathbb E U_j=0,\qquad \mathbb E U_j^2=h^2/12,\qquad
\mathbb E[U_jU_\ell]=0\quad(j\ne\ell).
\]
Consequently,
\begin{equation}\label{eq:symmetric-average}
\mathbb E[(\xi+U)^{\mathsf T}A(\xi+U)]
=\xi^{\mathsf T}A\xi+\frac{h^2}{12}\operatorname{tr}A.
\end{equation}
Applying Jensen's inequality to the convex scalar function $s\mapsto e^{-ts}$ and using the $\xi$-independent correction in \eqref{eq:symmetric-average} gives
\[
\frac1{h^k}\int_{\mathcal C_\xi}f(x)\,dx
=\mathbb E f(\xi+U)\ge f(\xi)e^{-th^2\operatorname{tr}A/12}.
\]
Thus
\begin{equation}\label{eq:cube-comparison}
h^k f(\xi)\le e^{th^2\operatorname{tr}A/12}\int_{\mathcal C_\xi}f(x)\,dx.
\end{equation}
Summing over the partition, using nonnegativity, yields
\[
h^k\sum_{\xi\in h(\mathbb Z^k+\theta)}f(\xi)
\le e^{th^2\operatorname{tr}A/12}\int_{\mathbb R^k}e^{-tx^{\mathsf T}Ax}\,dx.
\]
The substitution $y=\sqrt t\,A^{1/2}x$ gives
\begin{align*}
\int_{\mathbb R^k}e^{-tx^{\mathsf T}Ax}\,dx
&=t^{-k/2}(\det A)^{-1/2}\int_{\mathbb R^k}e^{-\|y\|_2^2}\,dy\\
&=\frac{(\pi/t)^{k/2}}{\sqrt{\det A}}.
\end{align*}
This proves \eqref{eq:lattice-bound}; its factor and all preceding estimates are independent of the translation.
\end{proof}

For an admissible sequence, let $y_1,\ldots,y_{2^n}$ be the distinct, equiprobable values of $Y$ in \eqref{eq:lattice-spacing}. Since the Gaussian weights are nonnegative,
\[
Q_t=\frac1{2^n}\sum_{j=1}^{2^n}e^{-ty_j^{\mathsf T}Ay_j}
\le\frac1{2^n}\sum_{z\in\mathbb Z^k}
 e^{-t[h(z+\theta)]^{\mathsf T}A[h(z+\theta)]}.
\]
Lemma~\ref{lem:gaussian-lattice}, with $h=2/(M\sqrt n)$, gives
\begin{equation}\label{eq:geometric-bound}
Q_t\le e^{t\operatorname{tr}A/(3M^2n)}\,
\frac1{2^n}\left(\frac{M\sqrt n}{2}\right)^k
\frac{(\pi/t)^{k/2}}{\sqrt{\det A}}.
\end{equation}
The bound \eqref{eq:geometric-bound} holds for every $n\ge1$, every symmetric positive definite $A$, and every $t>0$. For fixed $A,t$, the exponential factor is $1+O_{A,t}(n^{-1})$, uniformly over the sequence, because $M\ge1$.

\section{Main Results}\label{sec:main}

Proposition~\ref{prop:quadratic-bound} gives the bound for a general quadratic form; evaluating a particular form then proves Theorem~\ref{mainthe}. Subsection~\ref{sec:optimality} separately certifies optimality within the quadratic-form formula.

\subsection{The Bound Associated with a General Quadratic Form}

\begin{proposition}\label{prop:quadratic-bound}
Fix a positive integer $k$ and a real symmetric positive definite $k\times k$ matrix $A$, independent of $n$. Let $\Sigma=(a_1,\ldots,a_n)$ be admissible as in Definition~\ref{def:setting}, with $M=M(\Sigma)$. Define
\[
b_A=\max_{u\in[0,1]^k}u^{\mathsf T}Au,\qquad
C(A)=\sqrt{\frac{2k}{\pi}}\,\frac{(\det A)^{1/(2k)}}{\sqrt{b_A}}.
\]
As $n\to\infty$,
\[
M\ge\bigl(C(A)-o(1)\bigr)\frac{2^{n/k}}{\sqrt n},
\]
with an error depending only on $A,k,n$, uniformly over the admissible sequence. The same bound holds for $m_k(n)$.
\end{proposition}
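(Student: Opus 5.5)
We combine the lower bound of Lemma~\ref{lem:gaussian-probability} with the upper bound \eqref{eq:geometric-bound} for the same quantity $Q_t=\mathbb E e^{-tY^{\mathsf T}AY}$, and then optimise over the kernel parameter $t$.

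For the lower bound we reduce to the Euclidean kernel. We set $z_i=A^{1/2}u_i$, so that $Y^{\mathsf T}AY=\|Z\|_2^2$ with $Z=n^{-1/2}\sum_i r_iz_i$. Since $u_i\in[0,1]^k$, we have
\[
\|z_i\|_2^2=u_i^{\mathsf T}Au_i\le b_A .
\]
Lemma~\ref{lem:gaussian-probability}, applied with $b=b_A$ and a fixed $t$, then gives
\[
Q_t\ge\Bigl(1+\frac{2tb_A}{k}\Bigr)^{-k/2}-O_{A,k,t}(n^{-1}),
\]
uniformly over admissible sequences.

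Chaining this with \eqref{eq:geometric-bound} and rearranging yields
\[
\Bigl(\frac{M\sqrt n}{2}\Bigr)^k\ge 2^n\,\frac{(t/\pi)^{k/2}\sqrt{\det A}}{e^{t\operatorname{tr}A/(3M^2n)}}\Bigl[\Bigl(1+\frac{2tb_A}{k}\Bigr)^{-k/2}-O(n^{-1})\Bigr].
\]
Because $M\ge1$, the exponential factor is $1+O(n^{-1})$. Taking $k$th roots gives
\[
M\ge(1-o(1))\,\frac{2^{n/k}}{\sqrt n}\cdot 2\,(\det A)^{1/(2k)}\sqrt{\frac{t/\pi}{1+2tb_A/k}},
\]
with the $o(1)$ depending only on $A,k,t,n$.

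The function $t/(1+2tb_A/k)$ increases to $k/(2b_A)$ as $t\to\infty$. Its limit gives the constant
\[
2\sqrt{\frac{k}{2\pi b_A}}\,(\det A)^{1/(2k)}=C(A).
\]
Each fixed $t$ only yields the constant
\[
C_t(A)=C(A)\bigl(1+k/(2tb_A)\bigr)^{-1/2}<C(A).
\]

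This passage to $t\to\infty$ is the main obstacle, because the error term in Lemma~\ref{lem:gaussian-probability} is controlled only for fixed $t$. We handle it with a standard diagonal argument. Given $\varepsilon>0$, we pick $t$ fixed with $C_t(A)>C(A)-\varepsilon/2$. Then for $n\ge n_0(\varepsilon,A,k)$ the fixed-$t$ error terms reduce the constant by less than $\varepsilon/2$, uniformly in $\Sigma$. Hence
\[
M\ge (C(A)-\varepsilon)\,\frac{2^{n/k}}{\sqrt n}\qquad\text{for all }n\ge n_0,
\]
which is exactly the $o(1)$ statement, with the error depending only on $A,k,n$.

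We must also check that the bracketed lower bound is eventually positive, so that taking $k$th roots is legitimate. This holds because $(1+2tb_A/k)^{-k/2}$ is a fixed positive number.

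Finally, the bound applies to every admissible sequence, in particular to one attaining $m_k(n)$, so the same bound holds for $m_k(n)$.
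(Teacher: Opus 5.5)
Your proposal is correct and follows the paper's proof essentially step for step: the substitution $z_i=A^{1/2}u_i$ with $b=b_A$ in Lemma~\ref{lem:gaussian-probability}, the combination with \eqref{eq:geometric-bound}, and the choice of a fixed $t$ with $F_A(t)$ within $\varepsilon/2$ of $C(A)$ before letting $n$ grow. The only check you leave implicit is $b_A\ge e_1^{\mathsf T}Ae_1>0$, which the lemma's hypothesis $b>0$ requires; this follows at once from positive definiteness.
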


\begin{proof}
Use $u_i,Y$ from \eqref{eq:normalised-sum} and set $z_i=A^{1/2}u_i$. Positive definiteness gives $b_A\ge e_1^{\mathsf T}Ae_1>0$, while $\|z_i\|_2^2=u_i^{\mathsf T}Au_i\le b_A$. Also,
\[
Y^{\mathsf T}AY=\left\|\frac1{\sqrt n}\sum_{i=1}^n r_i z_i\right\|_2^2.
\]
Lemma~\ref{lem:gaussian-probability} therefore gives, for fixed $t>0$,
\begin{equation}\label{eq:quadratic-probability}
Q_t\ge\left(1+\frac{2tb_A}{k}\right)^{-k/2}-O_{A,k,t}(n^{-1}).
\end{equation}
Combining this with \eqref{eq:geometric-bound} and writing $R_n=M\sqrt n/2^{n/k}$ yields
\[
\left(1+\frac{2tb_A}{k}\right)^{-k/2}-O_{A,k,t}(n^{-1})
\le\bigl(1+O_{A,t}(n^{-1})\bigr)
\left(\frac{R_n}{2}\right)^k\frac{(\pi/t)^{k/2}}{\sqrt{\det A}}.
\]
The leading term on the left is positive for fixed $t$. Thus there are constants $\alpha_t,\beta_t\ge0$, depending only on $A,k,t$, such that, for all sufficiently large $n$ and every admissible sequence,
\[
R_n\ge F_A(t)
\left(\frac{1-\alpha_t/n}{1+\beta_t/n}\right)^{1/k},
\qquad
F_A(t)=2\sqrt{\frac t\pi}\,(\det A)^{1/(2k)}
\left(1+\frac{2tb_A}{k}\right)^{-1/2}.
\]
The numerator is positive for all sufficiently large $n$, with a threshold independent of the sequence. Moreover,
\[
\lim_{t\to\infty}F_A(t)
=\sqrt{\frac{2k}{\pi}}\,\frac{(\det A)^{1/(2k)}}{\sqrt{b_A}}
=C(A).
\]
To obtain the constant $C(A)$ while retaining the fixed-$t$ error estimate, let $\delta>0$. First choose a finite $t=t(A,k,\delta)$ such that $F_A(t)\ge C(A)-\delta/2$. Keep this $t$ fixed and choose $n_0=n_0(A,k,\delta)$ so that the factor in the finite-$n$ bound changes $F_A(t)$ by at most $\delta/2$ whenever $n\ge n_0$. Then
\[
M\ge\bigl(C(A)-\delta\bigr)\frac{2^{n/k}}{\sqrt n}
\quad\text{for every admissible sequence and every }n\ge n_0.
\]
This proves the asserted uniform error.
\end{proof}

The finite-$n$ Gaussian and lattice estimates above are used with fixed $A,t$; no uniformity for arbitrary choices $A=A_n$ or $t=t_n$ is asserted.

\subsection{Choice of Quadratic Form and Proof of the Main Theorem}

Proposition~\ref{prop:quadratic-bound} reduces the choice of the leading constant to maximising $(\det A)^{1/(2k)}/\sqrt{b_A}$. A useful candidate is invariant under coordinate permutations and scales the all-ones direction differently from its orthogonal complement. Let $I_k$ denote the identity matrix, let $\mathbf1=(1,\ldots,1)^{\mathsf T}$, and take
\begin{equation}\label{eq:H}
H=I_k-\frac{\mathbf1\mathbf1^{\mathsf T}}{k+1}.
\end{equation}

\begin{proposition}\label{prop:H}
For every positive integer $k$, the matrix $H$ in \eqref{eq:H} is positive definite and satisfies
\[
\det H=\frac1{k+1},\qquad b_H=\frac{\lfloor(k+1)^2/4\rfloor}{k+1}.
\]
\end{proposition}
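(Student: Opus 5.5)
The plan is to read off the spectrum of $H$ directly and then reduce $b_H$ to a one-variable integer optimisation over vertices.

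\emph{Spectrum and determinant.} The matrix $\mathbf1\mathbf1^{\mathsf T}$ has eigenvalue $k$ on $\mathbf1$ and eigenvalue $0$ on the $(k-1)$-dimensional complement $\mathbf1^\perp$. Hence $H$ has eigenvalue $1-k/(k+1)=1/(k+1)$ on $\mathbf1$ and eigenvalue $1$ on $\mathbf1^\perp$. All eigenvalues are positive, so $H$ is positive definite, and their product is $\det H=1/(k+1)$.

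\emph{Reduction of $b_H$ to vertices.} The map $u\mapsto u^{\mathsf T}Hu$ is convex because $H\succeq0$. A convex function on the polytope $[0,1]^k$ attains its maximum at a vertex, so
\[
b_H=\max_{u\in\{0,1\}^k}u^{\mathsf T}Hu .
\]
For a $0/1$ vector $u$ with exactly $m$ ones, $\|u\|_2^2=m$ and $\mathbf1^{\mathsf T}u=m$. Therefore
\[
u^{\mathsf T}Hu=m-\frac{m^2}{k+1}=\frac{m(k+1-m)}{k+1},
\qquad m\in\{0,1,\ldots,k\}.
\]

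\emph{Integer maximisation.} The numerator $m(k+1-m)$ is a concave quadratic, symmetric about $(k+1)/2$. Over integers it is maximised at $m=\lfloor (k+1)/2\rfloor$, giving $\lfloor (k+1)/2\rfloor\lceil (k+1)/2\rceil$. To identify this with $\lfloor (k+1)^2/4\rfloor$, split by parity of $k+1$:
\begin{itemize}
\item if $k+1=2j$, the product is $j^2=(k+1)^2/4$;
\item if $k+1=2j+1$, the product is $j(j+1)=((k+1)^2-1)/4=\lfloor (k+1)^2/4\rfloor$.
\end{itemize}
The maximising $m$ lies in $\{1,\ldots,k\}$ for every $k\ge1$, so it is a legitimate vertex.

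The only step needing any care is the convexity reduction to vertices. Once that is justified, the rest is the routine parity check above.
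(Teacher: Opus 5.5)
Your proposal is correct and takes essentially the same route as the paper: you read off the spectrum of $H$ from that of $\mathbf1\mathbf1^{\mathsf T}$, reduce $b_H$ to cube vertices by convexity, and maximise $m(k+1-m)/(k+1)$ over integers $m$. The only differences are cosmetic: you use joint convexity from $H\succeq0$ where the paper uses convexity in each coordinate separately, and you spell out the parity check identifying $\lfloor(k+1)/2\rfloor\lceil(k+1)/2\rceil$ with $\lfloor(k+1)^2/4\rfloor$, which the paper leaves implicit.
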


\begin{proof}
The matrix $\mathbf1\mathbf1^{\mathsf T}$ has eigenvalue $k$ in the $\mathbf1$ direction and eigenvalue $0$ on its orthogonal complement. Thus the eigenvalues of $H$ are $1/(k+1),1,\ldots,1$, proving positive definiteness and the determinant formula.

The quadratic form
\[
q_H(u)=\sum_{j=1}^k u_j^2-\frac1{k+1}\left(\sum_{j=1}^k u_j\right)^2
\]
is convex in each coordinate separately. Holding the other coordinates fixed, a maximum on $[0,1]$ can be attained at an endpoint. Repeating this for every coordinate shows that the maximum on the cube is attained at a vertex. At a vertex with $r$ coordinates equal to $1$,
\[
q_H(u)=r-\frac{r^2}{k+1}=\frac{r(k+1-r)}{k+1}.
\]
The maximum of $r(k+1-r)$ over integers $0\le r\le k$ is $\lfloor(k+1)^2/4\rfloor$, giving the formula for $b_H$.
\end{proof}

\begin{proof}[Proof of Theorem~\ref{mainthe}]
Take $A=H$ in Proposition~\ref{prop:quadratic-bound}. Using Proposition~\ref{prop:H},
\[
C(H)=\sqrt{\frac{2k}{\pi}}\,
\frac{(k+1)^{-1/(2k)}}{\sqrt{\lfloor(k+1)^2/4\rfloor/(k+1)}}
=C_k.
\]
This proves \eqref{eq:main-bound}.
\end{proof}

\subsection{Optimality of the Quadratic-Form Formula}\label{sec:optimality}

\begin{theorem}\label{thm:quadratic-optimality}
Fix a positive integer $k$. Let $C(A)$ be defined by Proposition~\ref{prop:quadratic-bound} and let $H$ be given by \eqref{eq:H}. Then
\[
\sup_{A\succ0}C(A)=C(H)=C_k,
\]
where the supremum ranges over all real symmetric positive definite $k\times k$ matrices. This optimises the formula $C(A)$, not the lower bound for $m_k(n)$ over all possible methods.
\end{theorem}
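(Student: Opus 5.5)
The plan is a duality certificate: a probability measure on the cube's vertices whose second-moment matrix is proportional to $H^{-1}$, fed into AM--GM. Since $C(A)$ is invariant under scaling $A\mapsto sA$, and $C(A)^{2k}$ is a constant multiple of $\det A/b_A^k$, it suffices to prove
\[
\frac{\det A}{b_A^{k}}\le\frac{\det H}{b_H^{k}}\qquad\text{for every }A\succ0.
\]
The inequality $C(H)\le\sup_{A\succ0}C(A)$ is trivial, and $C(H)=C_k$ was computed in the proof of Theorem~\ref{mainthe}.

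\emph{Step 1: reduce to a second-moment matrix.} Let $\mu$ be any probability measure on the vertices $\{0,1\}^k$, and put $S=\mathbb E_\mu[vv^{\mathsf T}]$. Then
\[
b_A\ge\mathbb E_\mu[v^{\mathsf T}Av]=\operatorname{tr}(AS).
\]
If $S\succ0$, the arithmetic--geometric mean inequality applied to the eigenvalues of $S^{1/2}AS^{1/2}$ gives
\[
\operatorname{tr}(AS)\ge k(\det A\,\det S)^{1/k}.
\]
Together these give $\det A\le (b_A/k)^k/\det S$ for every $A\succ0$. Equality holds in the AM--GM step exactly when $A\propto S^{-1}$. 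Equality holds in the first step exactly when $\mu$ is supported on maximisers of $v^{\mathsf T}Av$ over the cube.

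\emph{Step 2: build the certificate.} I want $\mu$ supported on the maximising vertices of $q_H$ and with $S=c\,H^{-1}$. One checks that $H^{-1}=I_k+\mathbf1\mathbf1^{\mathsf T}$. By Proposition~\ref{prop:H}, the maximising vertices are those with $r$ ones, where $r\in\{\lfloor (k+1)/2\rfloor,\lceil (k+1)/2\rceil\}$.

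The measure will be a mixture of uniform measures on the weight-$r$ vertices. The uniform measure on weight-$r$ vertices has second moments
\[
\mathbb E v_j^2=\frac rk,\qquad \mathbb E v_jv_\ell=\frac{r(r-1)}{k(k-1)}\quad(j\ne\ell).
\]
Matching $S\propto I+\mathbf1\mathbf1^{\mathsf T}$ requires diagonal to off-diagonal ratio $2:1$.

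For odd $k$, take $r=(k+1)/2$; then $r/k=2r(r-1)/(k(k-1))$ holds exactly. For even $k$, mix $r=k/2$ and $r=k/2+1$ with weights $p,1-p$, and solve the single linear equation in $p$ given by the ratio condition. For example, $k=2$ gives $p=2/3$. I will need to verify in general that the solution lies in $[0,1]$; this is the main computational point. For $k=1$, $\mu=\delta_1$ works directly.

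\emph{Step 3: conclude with equality.} For this $\mu$, $S=cH^{-1}\succ0$. Because $\mu$ is supported on maximisers of $q_H$, we get $\operatorname{tr}(HS)=b_H$. Also $H\propto S^{-1}$, so both inequalities in Step 1 are equalities at $A=H$. Hence
\[
\frac{\det A}{b_A^k}\le\frac{1}{k^k\det S}=\frac{\det H}{b_H^k}
\]
for every $A\succ0$, which proves $\sup_{A\succ0}C(A)=C(H)=C_k$.

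The main obstacle is Step 2 for even $k$: choosing the mixture weight so that the off-diagonal ratio is exactly $2:1$, and confirming the weight is admissible. Only the ratio matters, since $c>0$ is irrelevant by scaling.
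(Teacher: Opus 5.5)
Your proposal is correct and follows the paper's proof essentially step for step. Both use a probability measure on cube vertices, supported on the maximisers of $q_H$, with second moment $S=(b_H/k)H^{-1}$, and combine it with $b_A\ge\operatorname{tr}(AS)\ge k(\det A\det S)^{1/k}$, which is sharp at $A=H$. The weight you left to verify for $k=2m$ solves to $p=(m+1)/(2m+1)\in(0,1)$ on $r=m$. This is exactly the paper's choice, and gives $p=2/3$ when $k=2$.
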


\begin{proof}
For $k=1$, $A$ is a positive scalar and $b_A=A$, so $C(A)=\sqrt{2/\pi}=C_1$. Suppose henceforth that $k\ge2$. Recall from Proposition~\ref{prop:H} that
\[
b_H=\frac{\lfloor(k+1)^2/4\rfloor}{k+1}.
\]
We construct a random cube vertex $V\in\{0,1\}^k$ whose uncentred second moment is proportional to $H^{-1}$. This choice will make the averaging and matrix inequalities below sharp at $A=H$. Specifically, we require
\begin{equation}\label{eq:vertex-second-moment}
S:=\mathbb E[VV^{\mathsf T}]=\frac{b_H}{k}\left(I_k+\mathbf1\mathbf1^{\mathsf T}\right).
\end{equation}
First choose the number $R$ of coordinates equal to $1$, and then choose uniformly among the vertices with that number of ones. If $k$ is odd, set $R=(k+1)/2$. Here $b_H=(k+1)/4$, and
\[
\mathbb E R=2b_H,\qquad \mathbb E[R(R-1)]=(k-1)b_H.
\]
If $k=2m$ is even, set
\[
\mathbb P(R=m)=\frac{m+1}{2m+1},\qquad \mathbb P(R=m+1)=\frac m{2m+1}.
\]
Since $b_H=m(m+1)/(2m+1)$, direct calculation gives
\[
\mathbb E R=\frac{2m(m+1)}{2m+1}=2b_H,\qquad
\mathbb E[R(R-1)]=\frac{m(m+1)(2m-1)}{2m+1}=(k-1)b_H.
\]
Coordinate symmetry now gives, for $j\ne\ell$,
\[
\mathbb E V_j^2=\frac{\mathbb E R}{k}=\frac{2b_H}{k},\qquad
\mathbb E[V_jV_\ell]=\frac{\mathbb E[R(R-1)]}{k(k-1)}=\frac{b_H}{k}.
\]
This proves \eqref{eq:vertex-second-moment}.

The matrix $S$ is positive definite and satisfies
\[
H^{-1}=I_k+\mathbf1\mathbf1^{\mathsf T},\qquad S=\frac{b_H}{k}H^{-1}.
\]

For every positive definite $A$, the support of $V$ lies in $[0,1]^k$, so averaging its quadratic-form values gives the first inequality below. For the second, apply the arithmetic--geometric mean inequality to the positive eigenvalues of the symmetric positive definite matrix $A^{1/2}SA^{1/2}$. Its trace is $\operatorname{tr}(AS)$ and its determinant is $\det A\det S$; no commutativity of $A$ and $S$ is needed. Thus
\[
b_A\ge\mathbb E[V^{\mathsf T}AV]=\operatorname{tr}(AS)
\ge k(\det A\det S)^{1/k}.
\]
Consequently,
\[
C(A)\le\sqrt{\frac2\pi}\,(\det S)^{-1/(2k)}.
\]
By Proposition~\ref{prop:H} and $S=(b_H/k)H^{-1}$,
\[
\det S=\frac{(b_H/k)^k}{\det H}=\left(\frac{b_H}{k}\right)^k(k+1).
\]
Substitution gives
\[
C(A)\le\sqrt{\frac{2k}{\pi b_H}}\,(k+1)^{-1/(2k)}=C_k.
\]
The chosen values of $R$ all maximise $R(k+1-R)$, so $V^{\mathsf T}HV=b_H$ throughout the support of $V$. Also, $H^{1/2}SH^{1/2}=(b_H/k)I_k$. Thus both inequalities in $b_H\ge\operatorname{tr}(HS)\ge k(\det H\det S)^{1/k}$ are equalities, and $H$ attains the upper bound $C_k$.
\end{proof}

The geometric interpretation of this certificate is given in Appendix~\ref{app:geometry}, Remark~\ref{rem:ellipsoid-context}.

\section{Optimised Variance Comparison}\label{sec:variance}

The variance bound \eqref{eq:variance-bound} uses the Euclidean geometry of the original lattice. To distinguish the effect of the quadratic form from that of the kernel, we apply the second-moment comparison after a linear change of coordinates, using transformed unit cubes and the ball second moment in the centred model. The relation to the quadratic-form constant, together with Theorem~\ref{thm:quadratic-optimality} and Proposition~\ref{prop:variance-comparison}, yields the optimisation and comparison. The transformed-variance proof does not use the Gaussian probability estimate.

\subsection{Euclidean Prerequisites}

For a positive integer $k$ and $R\ge0$, write
\[
B_{k,2}(R)=\{x\in\mathbb R^k:\|x\|_2\le R\},\qquad
v_{k,2}=\operatorname{vol}(B_{k,2}(1)).
\]

\begin{proposition}\label{thm:pnorm-volume}
Let $k\ge1$ be an integer and let $R\ge0$. The volume of $B_{k,2}(R)$ is $v_{k,2}R^k$, where
\[
v_{k,2}=\frac{\pi^{k/2}}{\Gamma(1+k/2)}.
\]
Moreover,
\begin{equation}\label{thm:pnorm-surface}
\int_{B_{k,2}(R)}\|x\|_2^2\,dx=\frac{k}{k+2}v_{k,2}R^{k+2}.
\end{equation}
\end{proposition}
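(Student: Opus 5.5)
The plan is to compute both quantities from the Gaussian integral $I(s)=\int_{\mathbb R^k}e^{-s\|x\|_2^2}\,dx=(\pi/s)^{k/2}$ for $s>0$. This identity comes from Fubini and the one-dimensional integral $\int_{\mathbb R}e^{-sx^2}\,dx=\sqrt{\pi/s}$, which was already used in the proof of Lemma~\ref{lem:gaussian-lattice}.

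For the volume, scaling $x=Ry$ gives $\operatorname{vol}(B_{k,2}(R))=v_{k,2}R^k$ immediately when $R>0$; the case $R=0$ is trivial. To identify $v_{k,2}$, I will use the layer-cake (polar) decomposition. Write $V(r)=v_{k,2}r^k$ for the volume of the radius-$r$ ball. Then
\[
\int_{\mathbb R^k}g(\|x\|_2)\,dx=\int_0^\infty g(r)\,V'(r)\,dr=k\,v_{k,2}\int_0^\infty g(r)r^{k-1}\,dr
\]
for nonnegative measurable $g$. To justify this without surface measure, apply Fubini to $g(\|x\|)=\int_{\|x\|}^\infty(-g'(r))\,dr$ for smooth decreasing $g$, or more directly to the indicator representation. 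With $g(r)=e^{-r^2}$, substituting $u=r^2$ gives
\[
\pi^{k/2}=k\,v_{k,2}\cdot\tfrac12\Gamma(k/2),
\]
so $v_{k,2}=\pi^{k/2}/\bigl((k/2)\Gamma(k/2)\bigr)=\pi^{k/2}/\Gamma(1+k/2)$.

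For the second moment, I will apply the same radial formula with $g(r)=r^2\mathbf 1_{r\le R}$:
\[
\int_{B_{k,2}(R)}\|x\|_2^2\,dx=k\,v_{k,2}\int_0^R r^{k+1}\,dr=\frac{k}{k+2}v_{k,2}R^{k+2}.
\]
As an alternative that avoids the radial formula, one can write $\|x\|^2=\int_0^{\|x\|}2s\,ds$ and apply Tonelli. This gives
\[
\int_0^R 2s\bigl(\operatorname{vol}(B_R)-\operatorname{vol}(B_s)\bigr)\,ds=v_{k,2}\Bigl(R^{k+2}-\tfrac{2}{k+2}R^{k+2}\Bigr),
\]
which is the same value.

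The only step needing care is the justification of the radial integration formula. I will prove it via the Tonelli representation $g(\|x\|)=\int_0^\infty \mathbf 1_{\{\|x\|<r\}}\,d\mu(r)$ for decreasing $g$ (with $\mu=-dg$), which needs only the scaling $V(r)=v_{k,2}r^k$. For $g(r)=r^2\mathbf 1_{r\le R}$, I will use the layer representation given in the alternative computation above.
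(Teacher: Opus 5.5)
Your proposal is correct and follows essentially the same route as the paper: scaling gives $\operatorname{vol}(B_{k,2}(R))=v_{k,2}R^k$, and the radial formula obtained from this scaling is then applied twice. The paper justifies that formula via the pushforward of Lebesgue measure under $x\mapsto\|x\|_2$, while you use a layer-cake/Tonelli representation. Evaluating it on $e^{-r^2}$ against the Gaussian integral $\pi^{k/2}$ identifies $v_{k,2}$, and evaluating it on $r^2\mathbf 1_{[0,R]}(r)$ gives the second moment. Your alternative computation via $\|x\|_2^2=\int_0^{\|x\|_2}2s\,ds$ is also correct and gives the same value.
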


\begin{proof}
First let $v=\operatorname{vol}(B_{k,2}(1))$ be the unevaluated unit-ball volume. The inclusions
\[
[-k^{-1/2},k^{-1/2}]^k\subseteq B_{k,2}(1)\subseteq[-1,1]^k
\]
give $0<v<\infty$. Homogeneity of the norm gives $B_{k,2}(r)=rB_{k,2}(1)$ for $r>0$, so its volume is $vr^k$. At $r=0$, the ball is a singleton of volume zero since $k\ge1$.

Thus the image of Lebesgue measure under $x\mapsto\|x\|_2$ assigns mass $vr^k$ to $[0,r]$, and has density $kvr^{k-1}$ for $r>0$. For every nonnegative Borel function $f$, this gives the radial integration formula
\[
\int_{\mathbb R^k}f(\|x\|_2)\,dx
=\int_0^\infty f(r)\,d(vr^k)
=kv\int_0^\infty f(r)r^{k-1}\,dr.
\]

The substitution $s=x^2$ on the positive half-line gives
\[
\int_{\mathbb R}e^{-x^2}\,dx=\Gamma(1/2)=\sqrt\pi.
\]
Since the integrand below is a nonnegative product, Tonelli's theorem gives
\[
\int_{\mathbb R^k}e^{-\|x\|_2^2}\,dx=\pi^{k/2}.
\]
Applying the radial formula with $f(r)=e^{-r^2}$ and substituting $s=r^2$ gives
\[
kv\int_0^\infty e^{-r^2}r^{k-1}\,dr
=\frac{kv}{2}\Gamma(k/2)=v\Gamma(1+k/2).
\]
Since $k\ge1$, the gamma integrals are finite, and the two evaluations give
\[
v=\frac{\pi^{k/2}}{\Gamma(1+k/2)}=v_{k,2}.
\]
Finally, apply the radial formula with $f(r)=r^2\mathbf1_{[0,R]}(r)$, where $\mathbf1_E$ denotes the indicator of a set $E$:
\[
\int_{B_{k,2}(R)}\|x\|_2^2\,dx
=kv_{k,2}\int_0^R r^{k+1}\,dr
=\frac{k}{k+2}v_{k,2}R^{k+2}.
\]
This also holds at $R=0$, when both sides vanish.
\end{proof}

\begin{proposition}\label{prop:variance-comparison}
For every positive integer $k$,
\begin{equation}\label{eq:variance-constant-comparison}
D_{k,2}\le D_{1,2}=\frac1{\sqrt3}<C_1=\sqrt{\frac2\pi}\le C_k.
\end{equation}
\end{proposition}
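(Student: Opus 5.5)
The chain \eqref{eq:variance-constant-comparison} splits into four independent comparisons, which I will handle in order.

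\textbf{The equalities and the strict inequality.} First, $D_{1,2}=2\Gamma(3/2)/\sqrt{3\pi}$. Since $\Gamma(3/2)=\sqrt\pi/2$, this gives $D_{1,2}=1/\sqrt3$. Next, $C_1=\sqrt{2/\pi}$ was already noted after Theorem~\ref{mainthe}. The strict inequality $1/\sqrt3<\sqrt{2/\pi}$ is equivalent to $\pi<6$.

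\textbf{The bound $D_{k,2}\le D_{1,2}$.} Squaring and raising to the power $k$, the claim becomes $\Gamma(k/2+1)^{2}\le\bigl(\pi(k+2)/12\bigr)^k$. I would relate it to Proposition~\ref{thm:pnorm-volume}. The quantity $D_{k,2}$ arises from comparing the second moment of the ball of volume one with that of the unit cube. The ball of volume one has second moment
\[
\frac{k}{k+2}\,v_{k,2}^{-2/k}=\frac{k}{k+2}\cdot\frac{\Gamma(1+k/2)^{2/k}}{\pi}.
\]
The unit cube has second moment $k/12$. The ball minimises $\int\|x\|_2^2$ among sets of a given volume, by the bathtub principle: for any set $E$ of volume $v_{k,2}R^k$, one has $\int_E\|x\|_2^2\ge\int_{B_{k,2}(R)}\|x\|_2^2$, since the points of $E$ outside the ball have larger norm than those of the ball outside $E$. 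Hence
\[
\frac{k\,\Gamma(1+k/2)^{2/k}}{(k+2)\pi}\le\frac{k}{12},
\]
which rearranges to $D_{k,2}^2=4\Gamma(k/2+1)^{2/k}/(\pi(k+2))\le 4/12=1/3$.

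\textbf{The bound $C_1\le C_k$.} This is $(k+1)^{(k-1)/(2k)}\sqrt{k}\ge\sqrt{\lfloor (k+1)^2/4\rfloor}$. The cleanest route is Theorem~\ref{thm:quadratic-optimality} with $A=I_k$. Then $\det A=1$ and $b_A=k$, so $C(I_k)=\sqrt{2k/\pi}/\sqrt k=\sqrt{2/\pi}=C_1$. Theorem~\ref{thm:quadratic-optimality} gives $C(I_k)\le C_k$, which is the claim.

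\textbf{Main obstacle.} The only nonroutine point is the bathtub inequality. I would write it out carefully as
\[
\int_E\|x\|^2-\int_B\|x\|^2=\int_{E\setminus B}\|x\|^2-\int_{B\setminus E}\|x\|^2\ge R^2\bigl(|E\setminus B|-|B\setminus E|\bigr)=0,
\]
applied with $E=[-1/2,1/2]^k$.
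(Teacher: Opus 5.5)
Your proof is correct and follows the paper's argument essentially step for step. Both use the same bathtub comparison of a cube with a Euclidean ball of equal volume; you take $[-1/2,1/2]^k$ of volume $1$ where the paper takes $[-1,1]^k$ of volume $2^k$, a harmless rescaling. Both use $\Gamma(3/2)=\sqrt\pi/2$ for the scalar value and Theorem~\ref{thm:quadratic-optimality} at $A=I_k$ for $C_1\le C_k$. Your reduction of the strict inequality to $\pi<6$ is sharper than the paper's appeal to $\pi<4$, though either suffices.
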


\begin{proof}
Let $Q=[-1,1]^k$ and $B=B_{k,2}(\rho)$, where
\[
\rho=\left(\frac{2^k}{v_{k,2}}\right)^{1/k}
=\frac{2\Gamma(1+k/2)^{1/k}}{\sqrt\pi}.
\]
By Proposition~\ref{thm:pnorm-volume}, $\operatorname{vol}(B)=\operatorname{vol}(Q)=2^k$. The sets $Q\setminus B$ and $B\setminus Q$ have equal volume. The function $\|x\|_2^2$ is at least $\rho^2$ on the first set and at most $\rho^2$ on the second. Subtracting the two integrals, which are finite, therefore gives
\[
\int_B\|x\|_2^2\,dx\le\int_Q\|x\|_2^2\,dx.
\]
The ball moment formula \eqref{thm:pnorm-surface} and integration of each coordinate over the cube yield
\[
\frac{k}{k+2}\rho^2
\le\frac1{2^k}\int_Q\sum_{j=1}^k|x_j|^2\,dx
=\frac{k}{3}.
\]
Dividing by $k>0$ and taking positive square roots gives
\[
D_{k,2}=\frac{\rho}{\sqrt{k+2}}\le\frac1{\sqrt3}=D_{1,2}.
\]
Here $D_{k,2}$ is defined in \eqref{eq:variance-bound}, and $\Gamma(3/2)=\sqrt\pi/2$ gives the displayed scalar value. Since $\pi<4$, we have $1/\sqrt3<\sqrt{2/\pi}$. Finally, $b_{I_k}=k$ and $\det I_k=1$, so the definition of $C(A)$ gives $C(I_k)=\sqrt{2/\pi}$. Theorem~\ref{thm:quadratic-optimality} then gives $C_k\ge C(I_k)=C_1$, completing \eqref{eq:variance-constant-comparison}.
\end{proof}

\subsection{The Variance Bound on a Transformed Lattice}\label{app:variance}

For a real symmetric positive definite $k\times k$ matrix $A$, define
\[
V_k(A)=D_{k,2}\frac{\sqrt{k}(\det A)^{1/(2k)}}{\sqrt{b_A}}
=\frac{D_{k,2}}{C_1}C(A),\qquad V_k=V_k(H).
\]

\begin{proposition}\label{prop:ellipsoidal-variance}
Let $k,n\ge1$ be integers, let $\Sigma$ be admissible as in Definition~\ref{def:setting}, and let $A$ be a real symmetric positive definite $k\times k$ matrix. Then
\begin{equation}\label{eq:finite-ellipsoidal-variance}
\frac{M\sqrt n}{2^{n/k}}
\ge\left(\max\left\{V_k(A)^2-\frac{\operatorname{tr}A}{3b_A}2^{-2n/k},\,0\right\}\right)^{1/2}.
\end{equation}
For fixed $k,A$, this gives $M\ge(V_k(A)-o(1))2^{n/k}/\sqrt n$, with an error depending only on $k,A,n$, uniformly over admissible sequences. Moreover,
\begin{equation}\label{eq:optimised-variance}
\sup_{A\succ0}V_k(A)=V_k=D_{k,2}\frac{C_k}{C_1},
\qquad D_{k,2}\le V_k<C_k.
\end{equation}
The first inequality is an equality for $k=1$ and is strict for $k\ge2$.
\end{proposition}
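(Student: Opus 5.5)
I will run the classical variance argument after the linear change of coordinates $x\mapsto A^{1/2}x$, which converts the quadratic form into Euclidean length. Let $X$ be the centred subset sum of \eqref{eq:centered-subset} and set $W=A^{1/2}X$.

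\emph{Upper bound on the second moment.} By independence of the signs,
\[
\mathbb E\|W\|_2^2=\mathbb E X^{\mathsf T}AX=\frac14\sum_{i=1}^n a_i^{\mathsf T}Aa_i\le\frac{nM^2b_A}{4},
\]
since $a_i/M\in[0,1]^k$.

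\emph{Lower bound from distinctness.} The $2^n$ values of $X$ are distinct points of $\mathbb Z^k+\theta$. Around each value $x$, place the unit cube $x+[-1/2,1/2)^k$; these cubes are disjoint. Their images under $A^{1/2}$ are disjoint parallelepipeds, each of volume $\sqrt{\det A}$. Let $U$ be uniform on $[-1/2,1/2)^k$, independent of $X$. Exactly as in \eqref{eq:symmetric-average},
\[
\mathbb E\|A^{1/2}(X+U)\|_2^2=\mathbb E\|W\|_2^2+\frac{\operatorname{tr}A}{12}.
\]
The random vector $A^{1/2}(X+U)$ is uniform on a set of volume $2^n\sqrt{\det A}$. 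By the rearrangement step in the proof of Proposition~\ref{prop:variance-comparison} (compare with the equal-volume ball), its second moment is at least that of the uniform distribution on $B_{k,2}(\rho)$ with $v_{k,2}\rho^k=2^n\sqrt{\det A}$. By \eqref{thm:pnorm-surface}, that second moment is $\frac{k}{k+2}\rho^2$.

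\emph{Combining.} Chaining the two estimates gives
\[
\frac{nM^2b_A}{4}+\frac{\operatorname{tr}A}{12}\ge\frac{k}{k+2}\Bigl(\frac{2^n\sqrt{\det A}}{v_{k,2}}\Bigr)^{2/k}.
\]
Dividing by $b_A2^{2n/k}/4$ and using $\rho_1=2\Gamma(1+k/2)^{1/k}/\sqrt\pi$, the constant on the right becomes $D_{k,2}^2\,k(\det A)^{1/k}/b_A=V_k(A)^2$. The additive term becomes $\frac{\operatorname{tr}A}{3b_A}2^{-2n/k}$. Taking nonnegative parts and square roots yields \eqref{eq:finite-ellipsoidal-variance}. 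For fixed $A$ the correction tends to zero uniformly over sequences, which gives the asymptotic statement.

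\emph{The optimisation.} Since $V_k(A)=(D_{k,2}/C_1)C(A)$ is a fixed positive multiple of $C(A)$, Theorem~\ref{thm:quadratic-optimality} gives
\[
\sup_{A\succ0}V_k(A)=(D_{k,2}/C_1)C_k=V_k(H).
\]
For the inequalities in \eqref{eq:optimised-variance}:
\begin{itemize}
\item $D_{k,2}\le V_k$ is equivalent to $C_1\le C_k$, which is Proposition~\ref{prop:variance-comparison}.
\item $V_k<C_k$ is equivalent to $D_{k,2}<C_1$, which also follows from that proposition.
\end{itemize}
For $k=1$, $C_1=C_1$ gives equality. For $k\ge2$, strictness of $D_{k,2}<V_k$ means $C_k>C_1$, i.e.\ $C(H)>C(I_k)$. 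It suffices to note that $I_k$ is not a maximiser. From the equality analysis in Theorem~\ref{thm:quadratic-optimality}, equality forces $A^{1/2}SA^{1/2}$ to be scalar, so $A\propto S^{-1}\propto H$, and $I_k\not\propto H$. Alternatively, one can check directly that
\[
(k+1)^{(k-1)/k}\cdot k>\lfloor(k+1)^2/4\rfloor\qquad (k\ge2).
\]

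The main obstacle is the lower-bound step. One must justify that the disjoint transformed cells have the stated volume and that the equal-volume ball minimises the second moment. The latter uses the same set-difference argument as in Proposition~\ref{prop:variance-comparison}, now applied to a general finite-volume set rather than the cube. I also need to track that the $\operatorname{tr}A/12$ correction enters with the right sign and normalisation to give exactly $\operatorname{tr}A/(3b_A)\,2^{-2n/k}$.
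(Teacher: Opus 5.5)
Your proposal is correct and follows the paper's proof essentially step for step. The shared steps are the disjoint cells $A^{1/2}(s_i+[-1/2,1/2)^k)$ with independent uniform smoothing $U$, the equal-volume ball comparison giving $\frac{k}{k+2}R^2$, the $\operatorname{tr}A/12$ correction, the bound $\frac14\sum_i a_i^{\mathsf T}Aa_i\le nM^2b_A/4$, and the identity $V_k(A)=D_{k,2}C(A)/C_1$ combined with Theorem~\ref{thm:quadratic-optimality} and Proposition~\ref{prop:variance-comparison} for \eqref{eq:optimised-variance}; your strictness argument for $k\ge2$ (the AM--GM step cannot be an equality at $A=I_k$ because $S$ is not scalar) is the paper's, and your alternative direct inequality $k(k+1)^{(k-1)/k}>\lfloor(k+1)^2/4\rfloor$ also holds, since $(k+1)^{1/k}<2$ for $k\ge2$.
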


\begin{proof}[Proof of Proposition~\ref{prop:ellipsoidal-variance}]
Let $s_1,\ldots,s_N$ be the values of $X$ from Subsection~\ref{sec:model}, where $N=2^n$. They are distinct points in one translate of $\mathbb Z^k$, each with probability $1/N$. Let $U$ be independent of $X$ and uniform on $[-1/2,1/2)^k$. The sets
\[
A^{1/2}\bigl(s_i+[-1/2,1/2)^k\bigr)
\]
are disjoint and each has volume $\sqrt{\det A}$. Hence $A^{1/2}(X+U)$ is uniform on their union $E$, of volume $N\sqrt{\det A}$.

Put $R=(N\sqrt{\det A}/v_{k,2})^{1/k}$. The sets $E\setminus B_{k,2}(R)$ and $B_{k,2}(R)\setminus E$ have equal volume. The function $\|x\|_2^2$ is at least $R^2$ on the first and at most $R^2$ on the second. All the integrals are finite because the sets are bounded. The ball moment formula in Proposition~\ref{thm:pnorm-volume} therefore gives
\[
\mathbb E\|A^{1/2}(X+U)\|_2^2
=\frac1{N\sqrt{\det A}}\int_E\|x\|_2^2\,dx
\ge\frac{k}{k+2}R^2.
\]
The cube averages satisfy $\mathbb EU=0$ and $\mathbb E[UU^{\mathsf T}]=I_k/12$. Independence of $X$ and $U$ eliminates the mixed term, so
\[
\mathbb E\|A^{1/2}(X+U)\|_2^2
=\mathbb E[X^{\mathsf T}AX]+\frac{\operatorname{tr}A}{12}.
\]
For the signs defining $X$, independence and symmetry give $\mathbb E[r_ir_j]=0$ for $i\ne j$ and $\mathbb E[r_i^2]=1$. Since $a_i/M\in[0,1]^k$,
\[
\mathbb E[X^{\mathsf T}AX]
=\frac14\sum_{i=1}^n a_i^{\mathsf T}Aa_i
\le\frac{nM^2b_A}{4}.
\]
Combining these estimates yields
\[
\frac{nM^2b_A}{4}+\frac{\operatorname{tr}A}{12}
\ge\frac{k}{k+2}\left(\frac{2^n\sqrt{\det A}}{v_{k,2}}\right)^{2/k}.
\]
Using $v_{k,2}=\pi^{k/2}/\Gamma(1+k/2)$, dividing by $2^{2n/k}b_A/4>0$, and then taking nonnegative square roots gives \eqref{eq:finite-ellipsoidal-variance}. For fixed $k,A$, its right side tends to $V_k(A)>0$ and depends only on $k,A,n$, proving uniformity. No invertibility of the covariance matrix of $X$ is required.

The identity $V_k(A)=D_{k,2}C(A)/C_1$ and Theorem~\ref{thm:quadratic-optimality} give the supremum in \eqref{eq:optimised-variance}. Proposition~\ref{prop:variance-comparison} gives $D_{k,2}<C_1\le C_k$, hence $D_{k,2}\le V_k<C_k$.

For $k=1$, $C_k=C_1$. For $k\ge2$, the second moment matrix $S$ in \eqref{eq:vertex-second-moment} has the distinct eigenvalues $b_H/k$ and $(k+1)b_H/k$. Thus the matrix arithmetic--geometric mean inequality in the proof of Theorem~\ref{thm:quadratic-optimality} is strict at $A=I_k$, giving $C_1<C_k$ and therefore $D_{k,2}<V_k$.
\end{proof}

The ratio $V_k/D_{k,2}=C_k/C_1$ records the improvement obtained by optimising the quadratic form in the variance argument. For the same quadratic form, the Gaussian constant exceeds the variance constant by the factor below, which equals $C_k/V_k$ at $A=H$:
\[
\frac{C(A)}{V_k(A)}=\frac{C_1}{D_{k,2}}>1.
\]

\section{Concluding Remarks}\label{sec:conclusion}

For fixed $k$, combining Theorem~\ref{mainthe} with the construction following Definition~\ref{def:setting} gives, as $n\to\infty$,
\[
\bigl(C_k-o(1)\bigr)\frac{2^{n/k}}{\sqrt n}\le m_k(n)\le2^{\lceil n/k\rceil-1}.
\]
These bounds do not determine the precise asymptotics of $m_k(n)$.

A possible refinement is to retain the covariance matrix rather than pass immediately to the trace bound. In the application of Lemma~\ref{lem:gaussian-probability}, for fixed $A\succ0$ we have
\[
z_i=A^{1/2}u_i,\qquad b=b_A,\qquad
T=\frac1n\sum_{i=1}^n z_i z_i^{\mathsf T}\succeq0.
\]
Equation~\eqref{eq:gaussian-comparison} retains the factor $\det(I_k+2tT)^{-1/2}$, with an error uniform over the sequence for fixed $k,A,t$, even when $T$ is singular. The present argument does not determine whether admissibility constrains $T$ or the subset-sum locations sufficiently to improve the uniform leading constant. Theorem~\ref{thm:quadratic-optimality} rules out such an improvement by optimising the formula $C(A)$ alone; it does not rule out sharper finite-$n$ errors or refinements before the trace estimate.

\appendix
\section{Geometric Interpretation of the Quadratic-Form Certificate}\label{app:geometry}

\begin{remark}[Geometric interpretation]\label{rem:ellipsoid-context}
Put $K=\operatorname{conv}([0,1]^k\cup[-1,0]^k)$. For every real symmetric positive definite $A$, convexity of the quadratic form shows that the ellipsoid
\[
E_A=\{x:x^{\mathsf T}Ax\le b_A\},\qquad
\operatorname{vol}(E_A)=\omega_k\frac{b_A^{k/2}}{\sqrt{\det A}},
\]
contains $K$, where $\omega_k$ is the Euclidean unit-ball volume. Maximising $C(A)$ is therefore equivalent to minimising the volume of a centred ellipsoid containing $K$: any such ellipsoid can be tightened to one of the form $E_A$. Other centres cannot lower the minimum. Indeed, if $(x-c)^{\mathsf T}A(x-c)\le1$ contains the symmetric set $K$, averaging the inequalities for $x$ and $-x$ gives
\[
x^{\mathsf T}Ax\le1-c^{\mathsf T}Ac\qquad(x\in K).
\]
Since $K$ has nonempty interior, $1-c^{\mathsf T}Ac>0$. This defines a centred containing ellipsoid of no greater volume.

Let $\mathbf1_{k+1}$ be the all-ones vector in $\mathbb R^{k+1}$, let $P$ be the orthogonal projection onto $\mathbf1_{k+1}^{\perp}$, and put $Lx=P(x,0)$. Then
\[
(Lx)\cdot(Ly)=x\cdot y-\frac{(\sum_i x_i)(\sum_i y_i)}{k+1}
=x^{\mathsf T}Hy,
\]
so $L^{\mathsf T}L=H$ and $L$ is an isomorphism onto this hyperplane. For $a\in\{0,1\}^k$, $P(a,0)=La$ and $P(a,1)=L(a-\mathbf1)$. Taking convex hulls and using $P\mathbf1_{k+1}=0$ gives
\[
LK=P[0,1]^{k+1}=P[-1/2,1/2]^{k+1}.
\]
This projected cube is the closed Voronoi cell of the root lattice $A_k=\mathbb Z^{k+1}\cap\mathbf1_{k+1}^{\perp}$; see McKilliam, Subramanian, Viterbo, and Clarkson \cite[Theorem~1, preprint p.~5]{mckilliam2011error}, taking the closure of their cell. With the intrinsic Euclidean metric on the hyperplane, $L$ maps $E_H$ onto the Euclidean ball of radius $\sqrt{b_H}$. The proof of Theorem~\ref{thm:quadratic-optimality} gives an explicit second-moment certificate for its minimum-volume property. The Voronoi-cell identification is classical geometric context, not an input to the Gaussian bound.
\end{remark}

\section*{Declaration of generative AI and AI-assisted technologies in the manuscript preparation process}
OpenAI ChatGPT and Codex were used for literature and bibliographic assistance, manuscript editing and organisation.

The author independently verified the mathematical arguments and takes full responsibility for the content of the manuscript.

\bibliographystyle{plain}
\bibliography{ERDOS-references}
\end{document}